\title{}
\date{}
\newtheorem{theorem}{Theorem}
\newtheorem{lemma}{Lemma}
\newtheorem{corollary}{Corollary}
\newtheorem{definition}{Def{\kern0pt}inition}
\newtheorem{remark}{Remark}
\newtheorem{example}{Example}
\newcommand{\QED}{\hfill \rule{.1in}{.1in}}
\begin{document}
\begin{center}
\Large{Ef{\kern0pt}f{\kern0pt}iciency analysis of simple perturbed pairwise comparison matrices}\\[1cm]
\end{center}
\begin{center}
\normalsize
Krist\'of \'Abele-Nagy \\
\small{
Laboratory on Engineering and Management Intelligence, \\
Research Group of Operations Research and Decision Systems, \\
Institute for Computer Science and Control, \\
Hungarian Academy of Sciences (MTA SZTAKI); \\
Department of Operations Research and Actuarial Sciences,\\
Corvinus University of Budapest, Hungary \\
\verb|abele-nagy.kristof@sztaki.mta.hu|  } \\[5mm]

\normalsize
S\'andor Boz\'oki$^{1}$ \\
\small{
Laboratory on Engineering and Management Intelligence, \\
Research Group of Operations Research and Decision Systems, \\
Institute for Computer Science and Control, \\
Hungarian Academy of Sciences (MTA SZTAKI); \\
Department of Operations Research and Actuarial Sciences,\\
Corvinus University of Budapest, Hungary \\
\verb|bozoki.sandor@sztaki.mta.hu|  \\
\verb|http://www.sztaki.mta.hu/%7Ebozoki| }
\end{center}

\footnotetext[1]{corresponding author}

\begin{abstract}
Ef{\kern0pt}f{\kern0pt}iciency, the basic concept of multi-objective optimization
is investigated for the class of pairwise comparison matrices.
A weight vector is called ef{\kern0pt}f{\kern0pt}icient if
no alternative weight vector exists such that every pairwise ratio of the latter's components
is at least as close to the corresponding element of the pairwise comparison matrix
as the one of the former's components is, and the
latter's approximation is strictly better in at least one position.
A pairwise comparison matrix is called simple perturbed if it dif{\kern0pt}fers
from a consistent pairwise comparison matrix in one element and its reciprocal.
One of the classical weighting methods, the eigenvector method is analyzed.
It is shown in the paper that the principal right eigenvector of a simple perturbed
pairwise comparison matrix is ef{\kern0pt}f{\kern0pt}icient.
An open problem is exposed: the search for a necessary and suf{\kern0pt}f{\kern0pt}icient condition of that
the principal right eigenvector is ef{\kern0pt}f{\kern0pt}icient.\\

\textbf{Keywords}:
pairwise comparison matrix, ef{\kern0pt}f{\kern0pt}iciency, Pareto optimality, eigenvector
\end{abstract}

\section{Introduction}
To determine the importance and/or the weights of criteria as well as comparing
the alternatives in multi-attribute decision making problems are of crucial importance.

A ratio scale matrix $\mathbf{A}=[a_{ij}]$ is a positive square matrix with the reciprocal property
$a_{ij} = 1/a_{ji},$ where $i,j=1,2,\ldots,n,$ and $a_{ii} =1, i=1,2,\ldots,n.$
An $a_{ij}$ entry from $\mathbb{R}$ in $\mathbf{A}$ represents the strength or
the relative importance ratio of decision alternative $i$ over alternative $j$ with respect to a
common criterion. The relative importance ratios are usually elicited from people who
produce $n(n-1)/2$ subjective judgments on each possible pair of the alternatives. Once such a
matrix called pairwise comparison matrix (PCM) has been constructed ($n\geq3$), the objective is
to extract the implicit (positive) weights of the various alternatives.

It is often easier to make these comparisons in pairs, by answering the questions
\begin{itemize}
    \item How many times is a criterion more important than another criterion?
    \item With respect to a given criterion, how many times is an alternative better than another alternative?
    \item How many times is a voting power of a decision maker is greater than that of another decision maker?
    \item How many times is a scenario more probable than another one?
\end{itemize}
Numerical answers can be arranged in a matrix.
Pairwise comparison matrix $\mathbf{A}=[a_{ij}]_{i,j=1,\dots,n}$ thus has the following properties:
$a_{ij}>0$, $a_{ji}=1/a_{ij}$, $i,j=1,2,\dots,n$.
Let $\mathcal{PCM}_n$ denote the class of pairwise comparison matrices of size $n \times n,$
where $n \geq 3.$
$\mathbf{A}$ is called consistent, if $a_{ik}a_{kj}=a_{ij}$ holds for all $i,j,k=1,2,\ldots,n$,
otherwise it is called inconsistent.

Once the decision makers have provided their assessments on the pairwise ratios,
the objective is to f{\kern0pt}ind a weight vector
$\mathbf{w}=(w_1,w_2,\dots,w_n)^{\top}$
such that these ratios, $\frac{w_i}{w_j}$ be found that are as close as possible
to $a_{ij}$s for all $i,j=1,2,\dots,n$. The column vector
 $\mathbf{w}$ is usually normalized so that $\sum_i w_i =1$.
A weight vector can be extracted from a PCM in several ways
\cite{BajwaChooWedley2008,ChooWedley2004,Dijkstra2013,GolanyKress1993},
however, the paper investigates the eigenvector method \cite{Saaty1977} only.

If $\mathbf{A}$ is consistent, then the eigenvector equation $\mathbf{A}\mathbf{w}=n\mathbf{w}$ holds,
and $w_i/w_j=a_{ij}, \linebreak i,j=1,2,\dots,n, \, w_i>0, \sum_i w_i=1$ \cite{Saaty1977}.
This idea is extended to the general case as follows.
The eigenvector method, proposed by Saaty \cite{Saaty1977},
obtains the weight vector from the eigenvector equation $\mathbf{A}\mathbf{w}^{EM}=
\lambda_{\max}\mathbf{w}^{EM}$,
where $\lambda_{\max}$ denotes the Perron-eigenvalue (also known as
maximal or principal eigenvalue) of $\mathbf{A}$,
and $\mathbf{w}^{EM}$ is the corresponding principal right eigenvector.
\begin{equation}
\lambda_{\max} \geq n   \label{EQ:lambdamaxn}  
\end{equation}
and equality holds if and only if $\mathbf{A}$ is consistent.
$\mathbf{w}^{EM}$ is positive and unique up to a scalar multiplication.
Note that $\lambda_{\max}$ shall also be denoted by
$\lambda_{\delta}$ in the beginning of Section 2 in order
to emphasize its dependence on parameter $\delta$.
Later on $\lambda_{\delta}$ shall be shortened by $\lambda$.
Once $\mathbf{w}^{EM}$ is  computed, the  ratio $\frac{w^{EM}_i}{w^{EM}_j}$
can be compared to $a_{ij}$. In the consistent case, as mentioned before,
$\frac{w^{EM}_i}{w^{EM}_j} = a_{ij}$ for all $i,j=1,2,\dots,n$.

We now introduce the term ef{\kern0pt}f{\kern0pt}iciency also known as Pareto optimality,
or nondominatedness \cite[Chapter 2.4]{Zeleny1982} which is considered to be a
basic concept of multi-objective optimization and referring to the weight vectors derived from PCMs.
Let $\mathbf{A} =
\left[
a_{ij}
\right]_{i,j=1,\ldots,n} \in \mathcal{PCM}_n$  and
$\mathbf{w} = (w_1, w_2, \ldots, w_n)^{\top}$ be a positive weight vector.
\begin{definition} \label{def:DefinitionEfficient}  
 A positive weight vector  $\mathbf{w}$ is called \emph{ef{\kern0pt}f{\kern0pt}icient}
if no other positive weight vector
$\mathbf{w^{\prime}} = (w^{\prime}_1, w^{\prime}_2, \ldots, w^{\prime}_n)^{\top}$
exists such that
\begin{align}
 \left|a_{ij} - \frac{w^{\prime}_i}{w^{\prime}_j} \right| &\leq \left|a_{ij} - \frac{w_i}{w_j} \right| \qquad \text{ for all } 1 \leq i,j \leq n,  \\
 \left|a_{k{\ell}} - \frac{w^{\prime}_k}{w^{\prime}_{\ell}} \right| &<  \left|a_{k{\ell}} - \frac{w_k}{w_{\ell}} \right|  \qquad \text{ for some } 1 \leq k,\ell \leq n.
\end{align}
\end{definition}
 A weight vector
$\mathbf{w}$ is called \emph{inef{\kern0pt}f{\kern0pt}icient} if it is not ef{\kern0pt}f{\kern0pt}icient. \\

It follows from the def{\kern0pt}inition, that $\mathbf{w}^{EM}$ is ef{\kern0pt}f{\kern0pt}icient for every consistent
PCM as $a_{ij} = \frac{w^{EM}_i}{w^{EM}_j}$ for all $i,j=1,\ldots,n.$\\

Blanquero et al. \cite{BlanqueroCarrizosaConde2006} investigated
several necessary and suf{\kern0pt}f{\kern0pt}icient conditions on
ef{\kern0pt}f{\kern0pt}iciency.
One of these ef{\kern0pt}f{\kern0pt}iciency conditions is utilized in our
paper, which applies a directed graph representation.

\begin{definition} \label{def:DirectedGraphEfficient}  
Let $\mathbf{A} =
\left[
a_{ij}
\right]_{i,j=1,\ldots,n} \in \mathcal{PCM}_n$  and
$\mathbf{w} = (w_1, w_2, \ldots, w_n)^{\top}$ be a positive weight vector.
 A directed graph $G:=(V,\overrightarrow{E})_{\mathbf{A},\mathbf{w}}$ is def{\kern0pt}ined as follows:
$V=\{1,2,\ldots,n\} $ and
\[
 \overrightarrow{E} = \left\{ \text{arc(}  i \rightarrow j\text{)}   \left|  \frac{w_i}{w_j} \geq a_{ij}, i \neq j \right. \right\}.
\]
\end{definition}
\begin{theorem} \cite[Corollary 10]{BlanqueroCarrizosaConde2006} \label{thm:TheoremDirectedGraphEfficient}
Let $\mathbf{A} \in \mathcal{PCM}_n$.
A weight vector $\mathbf{w}$ is ef{\kern0pt}f{\kern0pt}icient if and only if
$G=(V,\overrightarrow{E})_{\mathbf{A},\mathbf{w}}$
is  a strongly connected digraph, that is,
there exist directed paths from $i$ to $j$ and from $j$ to $i$ for all
pairs of  nodes  $i , j$.
\end{theorem}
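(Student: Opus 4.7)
The plan is to prove both directions of the biconditional, exploiting a tournament-like property of the graph $G$: since $a_{ji}=1/a_{ij}$, the inequality $w_i/w_j\ge a_{ij}$ is equivalent to $w_j/w_i\le a_{ji}$, so for every ordered pair with $i\ne j$ at least one of the arcs $i\to j$ and $j\to i$ lies in $\overrightarrow{E}$, with both lying there exactly when $w_i/w_j=a_{ij}$.

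\textbf{Necessity.} Suppose $G$ is not strongly connected. The strongly connected components of $G$ form a directed acyclic graph on at least two vertices, hence admitting a source; let $S\subsetneq V$ be the vertex set of such a source component. Then no arc of $\overrightarrow{E}$ enters $S$ from $T:=V\setminus S$, so by the tournament property, for every $i\in S$ and $j\in T$ the arc $i\to j$ lies in $\overrightarrow{E}$ while $j\to i$ does not --- equivalently $w_i/w_j>a_{ij}$ strictly. Pick $\alpha>1$ so close to $1$ that $w_i/(\alpha w_j)>a_{ij}$ remains valid for every $(i,j)\in S\times T$ (possible because $S\times T$ is finite and all gaps $w_i/w_j-a_{ij}$ are strictly positive), and define $w'_i:=w_i/\alpha$ for $i\in S$ and $w'_j:=w_j$ for $j\in T$. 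Ratios within $S$ and within $T$ are unchanged, whereas for each $(i,j)\in S\times T$ the new $w'_i/w'_j$ still exceeds $a_{ij}$ but lies strictly closer to it, and symmetrically $w'_j/w'_i<a_{ji}$ lies strictly closer to $a_{ji}$. Hence $\mathbf{w}'$ weakly dominates $\mathbf{w}$ with strict improvement at every cross pair, and $\mathbf{w}$ is inefficient.

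\textbf{Sufficiency.} Conversely, suppose $G$ is strongly connected and, for contradiction, that $\mathbf{w}'$ witnesses the inefficiency of $\mathbf{w}$. Set $r_i:=w'_i/w_i>0$. If all $r_i$ coincide then $w'_i/w'_j=w_i/w_j$ for every $i,j$, precluding the strict improvement required at $(k,\ell)$. Otherwise let $M:=\max_i r_i$ and $S:=\{i:r_i=M\}$, so that $T:=V\setminus S\ne\emptyset$. For any $i\in S$ and $j\in T$ we have $r_i/r_j>1$, hence $w'_i/w'_j>w_i/w_j$; combined with $|a_{ij}-w'_i/w'_j|\le|a_{ij}-w_i/w_j|$, this forces $w_i/w_j<a_{ij}$, i.e.\ the arc $i\to j$ is absent from $\overrightarrow{E}$. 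Thus no arc of $\overrightarrow{E}$ leaves $S$ for $T$, contradicting strong connectivity.

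I expect the main point requiring care to be the choice of $\alpha$ in the necessity direction --- it must be uniformly good across all of $S\times T$ --- but, as noted, this reduces to a minimum over finitely many positive numbers and is routine. The only other delicate point, the edge case $w_i/w_j=a_{ij}$ in which both arcs are simultaneously present, does not arise for cross pairs in the necessity argument (the inequalities there are strict) and does not interfere with the sufficiency argument either.
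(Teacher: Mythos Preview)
Your proof is correct. Note, however, that the paper does not actually prove this theorem: it is quoted as Corollary~10 of Blanquero, Carrizosa and Conde (2006) and used as a black box, so there is no ``paper's own proof'' to compare against. Your argument is a clean, self-contained verification of the cited result.

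A couple of minor remarks. In the necessity direction, your key observation that the cross inequalities $w_i/w_j>a_{ij}$ are \emph{strict} is exactly what makes the perturbation by $\alpha$ work; you handled this correctly by deducing strictness from the absence of the reverse arc $j\to i$. In the sufficiency direction, the one-line case analysis showing that $y>x$ together with $|a-y|\le|a-x|$ forces $x<a$ is the crux, and it is sound. The tournament-like property you isolate up front (for $i\ne j$, at least one of the two arcs is always present) is what drives both directions and is worth stating explicitly, as you did.
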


Blanquero et al. \cite[Section 3]{BlanqueroCarrizosaConde2006} also showed that the
principal right eigenvector can be inef{\kern0pt}f{\kern0pt}icient.
 This remarkable result was recalled by
Bajwa, Choo and Wedley \cite{BajwaChooWedley2008}
 and by  Conde and P\'erez \cite{CondePerez2010}
 and also by Fedrizzi \cite{Fedrizzi2013}.

Another numerical example is provided here to illustrate (in)ef{\kern0pt}f{\kern0pt}iciency and its digraph representation.
\begin{example} \label{example1}  
Let $ \mathbf{A} \in \mathcal{PCM}_4$ as follows:
\[
\mathbf{A} =
\begin{pmatrix}
$\,$  1   $\,\,$ & $\,\,$   1   $\,\,$ & $\,\,$  1/5 $\,\,$ & $\,\,$     1/5  $\,$    \\
$\,$  1   $\,\,$ & $\,\,$   1   $\,\,$ & $\,\,$  1/3 $\,\,$ & $\,\,$     1/7  $\,$    \\
$\,$  5  $\,\,$ & $\,\,$   3   $\,\,$ & $\,\,$   1  $\,\,$ & $\,\,$     1/4  $\,$   \\
$\,$  5  $\,\,$ & $\,\,$   7   $\,\,$ & $\,\,$   4  $\,\,$ & $\,\,$      1   $\,$
\end{pmatrix}.
\]
The principal right eigenvector of $\mathbf{A}$ and the consistent
approximation of $\mathbf{A}$, generated by $\mathbf{w}^{EM}$, are displayed, truncated at
8 and 4 correct digits, respectively:
\[
\mathbf{w}^{EM} =
\begin{pmatrix}
  0.07777933    \\
  0.07732534    \\
  0.24353753     \\
  0.60135778
\end{pmatrix},
\quad
\left[ \frac{w^{EM}_i}{w^{EM}_j} \right] =
\begin{pmatrix}
$\,$       1  $\,\,$ & $\,\,$ 1.0058 $\,\,$ & $\,\,$ 0.3193 $\,\,$ & $\,\,$ 0.1293 $\,$    \\
$\,$ 0.9941 $\,\,$ & $\,\,$      1   $\,\,$ & $\,\,$ 0.3175 $\,\,$ & $\,\,$ 0.1285 $\,$    \\
$\,$ 3.1311 $\,\,$ & $\,\,$ 3.1495 $\,\,$ & $\,\,$       1  $\,\,$ & $\,\,$ 0.4049 $\,$   \\
$\,$ 7.7315 $\,\,$ & $\,\,$ 7.7769 $\,\,$ & $\,\,$ 2.4692 $\,\,$ & $\,\,$       1  $\,$
\end{pmatrix}.
\]


Let us apply Def{\kern0pt}inition \ref{def:DirectedGraphEfficient} in order to draw the
digraph associated to matrix $ \mathbf{A}$ and its principal right eigenvector $\mathbf{w}^{EM}$.
The digraph in Figure 1 cannot be strongly connected because no arc leaves node 2.
\end{example}  

\unitlength 1mm
\begin{center}
\begin{picture}(100,40)
\put(39,10){\resizebox{20mm}{!}{\rotatebox{0}{\includegraphics{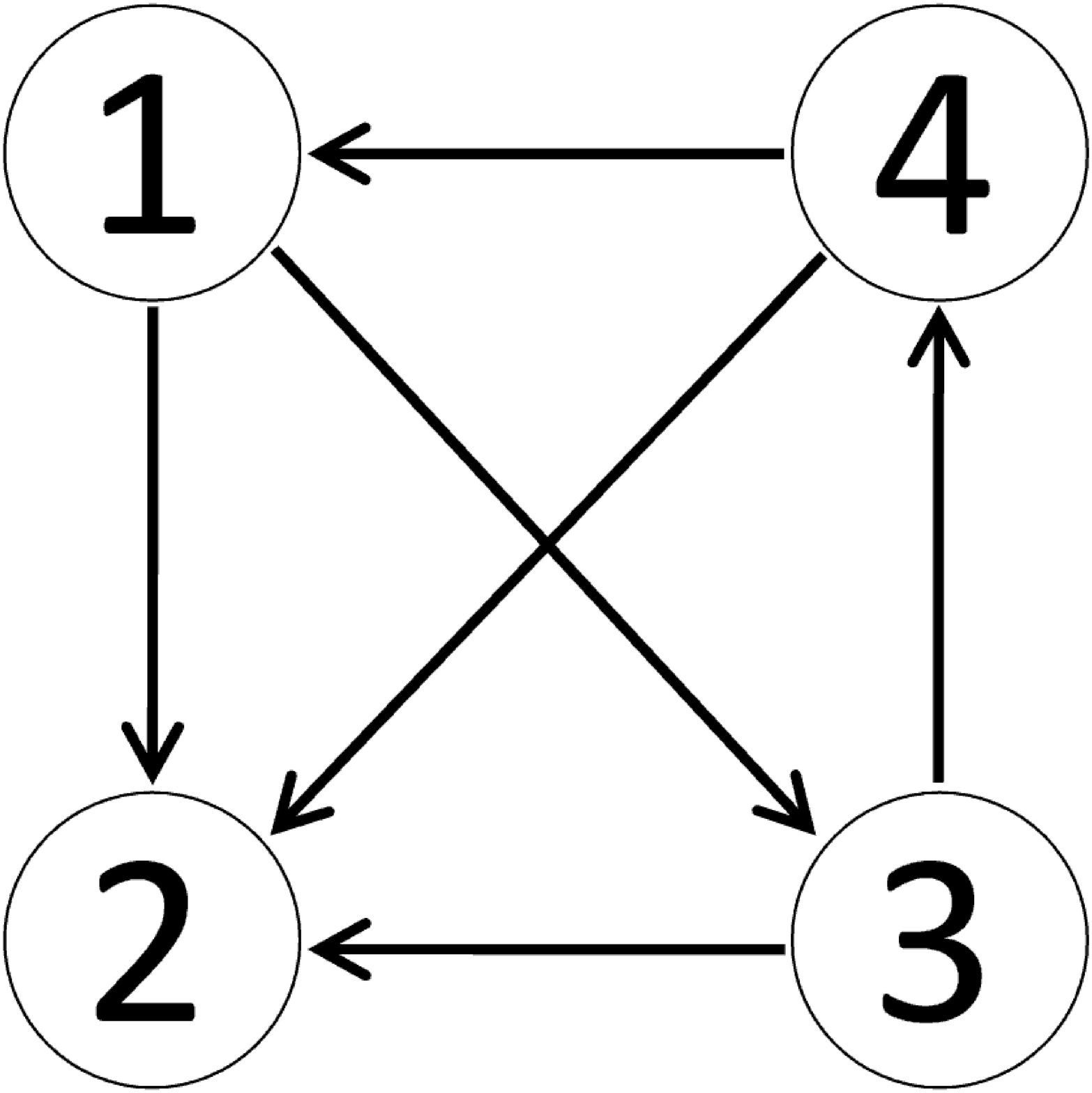}}}}
\put(-10,3){\makebox{\textbf{Figure 1.}
The principal right eigenvector in Example 1.1 is inef{\kern0pt}f{\kern0pt}icient, }}
\put(7,-1){\makebox{because its associated digraph is not strongly connected }}
\end{picture}
\end{center}

\bigskip
It might be instructive to see another, direct evidence why $\mathbf{w}^{EM}$ is inef{\kern0pt}f{\kern0pt}icient.
Let us increase the second coordinate of $\mathbf{w}^{EM}$ until it reaches ${w}^{EM}_1$, i.e.,
def{\kern0pt}ine $ \mathbf{w}^{\prime} := ({w}^{EM}_1, {w}^{EM}_1, {w}^{EM}_3, {w}^{EM}_4)^{\top}$. Then
\[
\mathbf{w}^{\prime} :=
\begin{pmatrix}
  0.07777933    \\
  0.07777933    \\
  0.24353753     \\
  0.60135778
\end{pmatrix},
\quad
\left[ \frac{w^{\prime}_i}{w^{\prime}_j} \right] =
\begin{pmatrix}
$\,$       1  $\,\,$ & $\,\,$      \textbf{1}   $\,\,$ & $\,\,$ 0.3193 $\,\,$ & $\,\,$ 0.1293 $\,$    \\
$\,$ \textbf{1} $\,\,$ & $\,\,$      1   $\,\,$ & $\,\,$ \textbf{0.3193} $\,\,$ & $\,\,$ \textbf{0.1293} $\,$    \\
$\,$ 3.1311 $\,\,$ & $\,\,$ \textbf{3.1311} $\,\,$ & $\,\,$       1  $\,\,$ & $\,\,$ 0.4049 $\,$   \\
$\,$ 7.7315 $\,\,$ & $\,\,$ \textbf{7.7315} $\,\,$ & $\,\,$ 2.4692 $\,\,$ & $\,\,$       1  $\,$
\end{pmatrix}.
\]
It can be seen that (with $\mathbf{w} = \mathbf{w}^{EM})$
the strict inequality (3) in Def{\kern0pt}inition \ref{def:DefinitionEfficient}
holds exactly for the non-diagonal elements of the second row/column,
marked by bold. For all other entries inequality (2) holds with equality. \\

Example 1.1 above illustrates that Theorem \ref{thm:TheoremDirectedGraphEfficient}
is powerful and easy to apply.\\

Boz\'oki \cite{Bozoki2014} showed that the principal right eigenvector of
the parametric pairwise comparison matrix
\[
\mathbf{A}(p,q) =
\begin{pmatrix}
   1     &     p    &     p    &   p     &  \ldots &    p    &  p   \\
 1/p     &     1    &     q    &   1     &  \ldots &    1    &  1/q     \\
 1/p     &    1/q   &     1    &   q     &  \ldots &    1    &  1       \\
  \vdots &   \vdots &   \vdots &  \ddots &         &  \vdots &   \vdots  \\
  \vdots &   \vdots &   \vdots &         &  \ddots &  \vdots &   \vdots  \\
 1/p     &     1    &     1    &   1     &  \ldots &    1    &   q       \\
 1/p     &     q    &     1    &   1     &  \ldots &    1/q    &   1
\end{pmatrix}\in \mathcal{PCM}_n,
\]
where $n \geq 4,$ $p > 0$ and $1 \neq q > 0,$ is inef{\kern0pt}f{\kern0pt}icient. \\

However, the general problem is still open:\\[3mm]
\textbf{Question 1.1. }
What is the necessary and suf{\kern0pt}f{\kern0pt}icient condition of that
the principal right eigenvector is ef{\kern0pt}f{\kern0pt}icient? \\

Our signif{\kern0pt}icantly more moderate objective is to provide a new suf{\kern0pt}f{\kern0pt}icient condition.
The main contribution of the paper is the ef{\kern0pt}f{\kern0pt}iciency analysis of
another special class of PCMs.
Departing from a consistent PCM, let us modify a single element
and its reciprocal, which results in a simple perturbed PCM.
 It will be shown in this paper that
the principal right eigenvector of a simple perturbed
pairwise comparison matrix is ef{\kern0pt}f{\kern0pt}icient.

\section{Simple perturbed pairwise comparison matrix}
Let $ x_1 , x_2 , \dots , x_{n-1}$ be arbitrary positive numbers.
Let $\delta$ represent a
multiplicative perturbation factor with an arbitrary positive number, where $\delta \neq 1.$
A simple perturbed PCM is def{\kern0pt}ined as follows:
\begin{equation}
\label{EQ:SimplePerturbedPCM}  
 \mathbf{A}_{\delta}  =
\left(\begin{array}{ccccc}
1 & x_1 \delta & x_2 & \dots & x_{n-1} \\ \frac{1}{x_1 \delta} & 1 & \frac{x_2}{x_1} & \dots & \frac{x_{n-1}}{x_1} \\
\frac{1}{x_2} & \frac{x_1}{x_2} & 1 & \dots & \frac{x_{n-1}}{x_2} \\
\vdots & \vdots & \vdots & \ddots & \vdots \\
\frac{1}{x_{n-1}} & \frac{x_1}{x_{n-1}} & \frac{x_2}{x_{n-1}} & \dots & 1
\end{array} \right)\in \mathcal{PCM}_n.
\end{equation}
As it is apparent from matrix $\mathbf{A}_{\delta}$, such a simple perturbed PCM is
constructed from a consistent PCM by entering a perturbation factor $\delta$
at its entry $a_{12}$, while its reciprocal entry $a_{21}$ is multiplied by $\frac{1}{\delta}$.
We remark, that any location of the perturbed pair of elements in the PCM does not involve the loss of generality.

PCMs that can be made consistent by the modif{\kern0pt}ication of one/two/three
entries have been analyzed by Boz\'{o}ki, F\"{u}l\"{o}p and Poesz \cite{BozokiFulopPoesz2011}.
Out of 20 PCMs of size $4 \times 4$ originated from real decision problems,
6 PCMs were simple perturbed. See \cite[Table 1]{BozokiFulopPoesz2011} for more
f{\kern0pt}indings.
The more general idea of comparing two PCMs that dif{\kern0pt}fer from each other in a single entry
(and its reciprocal) has been applied by Cook and Kress \cite[Axiom 2]{CookKress1988} and
also by Brunelli and Fedrizzi \cite[Axiom 4]{BrunelliFedrizzi2015}.

 Let $  \lambda_{\delta} $ denote the Perron-eigenvalue of
$\mathbf{A}_{\delta}$.
It follows from (\ref{EQ:lambdamaxn}) that if $\delta \neq 1$, then $ \lambda_{\delta} > n$.
Formally $ \lambda_1 = n$ holds, but we shall not consider $\mathbf{A}_1$ a simple perturbed PCM,
as it is consistent.

Farkas \cite{Farkas2007} shows, that $\lambda_{\delta}$
can be obtained from the following equation:
\[ \lambda^3_{\delta}-n{ \lambda^2_{\delta} }-(n-2)\left(\delta+\frac{1}{\delta}-2\right)=0.\]

One can write an explicit formula for $\lambda_{\delta}$:
\begin{equation*}
 \lambda_{\delta}  =
\frac{1}{6}
\sqrt[3]{ \frac{ B+12\sqrt{3 C}}{\delta} }
+
\frac{2}{3}
\sqrt[3]{ \frac{\delta}{ B +12\sqrt{3 C }} }
+
\frac{1}{3}n,
\end{equation*}
where\\
$ B  = 8 n^3 \delta + 108 n \delta^2  - 216 n \delta + 108 n - 216 \delta^2 + 432 \delta - 216,$ \\
$ C  =
4 n^4 \delta^3
-8 n^4 \delta^2
+4 n^4 \delta
-8  n^3 \delta^3
+16 n^3 \delta^2
-8 n^3 \delta
+27 n^2 \delta^4
+162 n^2 \delta^2$ \\
\indent
$-108 n^2 \delta^3
-108 n^2 \delta
+ 27 n^2
-108 n \delta^4
+432 n \delta^3
-648 n \delta^2
+432 n \delta
-108 n$ \\
\indent
$
+108 \delta^4
-432 \delta^3
+648 \delta^2
-432 \delta
+ 108,$ \\
even if the main results of the paper can be proved without the expanded formula above.
In the remainder of the paper $\lambda$ denotes $ \lambda_{\delta}
 = \lambda_{\max}(\mathbf{A}_{\delta}) $.

Farkas, R\'ozsa and Stubnya \cite{FarkasRozsaStubnya1999} developed a general method
to write the explicit form of the principal right eigenvector, when the perturbed elements
are in the same row/column of the PCM.
According to Farkas \cite[Formula 26]{Farkas2007}, the principal right eigenvector
 of the simple perturbed PCM $ \mathbf{A}_{\delta}$
can be written as
\begin{equation} 
\label{EQ:Farkas26}  
\mathbf{w}^{EM}=
\begin{pmatrix}
{w}_1^{EM} \\
{w}_2^{EM} \\
\vdots \\
{w}_i^{EM} \\
\vdots
\end{pmatrix}
=
\begin{pmatrix}
\lambda-1+\delta \\ \frac{1}{x_1}(\lambda-1+\frac{1}{\delta}) \\ \vdots \\ \frac{1}{x_{i-1}}\lambda \frac{\lambda-2}{n-2}  \\ \vdots
\end{pmatrix}, \qquad
i=3,4,\dots,n.   
\end{equation}
The principal right eigenvector of PCM $ \mathbf{A}_{\delta}$
can be written in an alternative way (Farkas \cite[Formula 24]{Farkas2007}):
\begin{equation} 
\label{EQ:Farkas24}        
\mathbf{w}^{EM}=
\begin{pmatrix}
{w}_1^{EM} \\
{w}_2^{EM} \\
\vdots \\
{w}_i^{EM} \\
\vdots
\end{pmatrix}
=
c
\begin{pmatrix}
\lambda (\lambda-n+1) \\ \frac{1}{x_1}\left[\lambda-(1-\frac{1}{\delta})(\lambda-n+2)\right] \\ \vdots \\ \frac{1}{x_{i-1}}(\lambda-1+\frac{1}{\delta}) \\ \vdots
\end{pmatrix}, \qquad
i=3,4,\dots,n,
\end{equation}
where scalar $c$ can be expressed as $\frac{\lambda-1+\delta}{\lambda (\lambda-n+1)}$.
Both formulas (\ref{EQ:Farkas26})-(\ref{EQ:Farkas24}) shall be applied, depending on our purpose.

\begin{remark}
\label{remark:3x3} 
Every PCM of size $3 \times 3$ is either consistent or simple perturbed.
\end{remark}

\section{Ef{\kern0pt}f{\kern0pt}iciency of the principal right eigenvector of a simple perturbed pairwise comparison matrix}

Note that the approximation of the entries of the bottom-right $(n-2) \times (n-2)$ submatrix of
$ \mathbf{A_{\delta}} $ is perfect, i.e., $a_{ij} = \frac{{w}_i^{EM}}{{w}_j^{EM}} \, \, (i,j=3,4,\dots,n).$
It remains to check the approximations of the elements in the f{\kern0pt}irst and second rows and columns.
\newpage
\begin{lemma}
\label{a12}
If $\delta>1,$ then $\frac{w_1^{EM}}{w_2^{EM}} < a_{12}$.
If $\delta<1,$ then $\frac{w_1^{EM}}{w_2^{EM}} > a_{12}$.
\end{lemma}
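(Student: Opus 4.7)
The plan is to use formula (\ref{EQ:Farkas26}) to get a closed form for the ratio $w_1^{EM}/w_2^{EM}$, compare it directly with $a_{12}=x_1\delta$, and reduce the inequality to something whose sign can be read off from the sign of $\delta-1$ together with the bound $\lambda>n$.

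First I would substitute the first two components from (\ref{EQ:Farkas26}): $w_1^{EM}=\lambda-1+\delta$ and $w_2^{EM}=\tfrac{1}{x_1}(\lambda-1+\tfrac{1}{\delta})$. Dividing gives
\[
\frac{w_1^{EM}}{w_2^{EM}} \;=\; x_1\cdot\frac{\lambda-1+\delta}{\lambda-1+\tfrac{1}{\delta}},
\]
so the claimed comparison with $a_{12}=x_1\delta$ is equivalent (after cancelling $x_1>0$) to comparing $(\lambda-1+\delta)$ with $\delta(\lambda-1+\tfrac{1}{\delta})=\delta(\lambda-1)+1$.

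The next step is to simplify the difference $\delta(\lambda-1)+1-(\lambda-1+\delta)$. A short rearrangement groups the terms as $(\delta-1)(\lambda-1)-(\delta-1)=(\delta-1)(\lambda-2)$. Hence
\[
a_{12}-\frac{w_1^{EM}}{w_2^{EM}} \;=\; \frac{x_1(\delta-1)(\lambda-2)}{\lambda-1+\tfrac{1}{\delta}},
\]
and the denominator is positive because $\lambda>0$ and $\delta>0$.

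Finally I invoke (\ref{EQ:lambdamaxn}): since $\mathbf{A}_\delta$ is inconsistent for $\delta\neq 1$, we have $\lambda>n\geq 3$, so in particular $\lambda-2>0$. Therefore the sign of $a_{12}-w_1^{EM}/w_2^{EM}$ coincides with the sign of $\delta-1$, which is exactly the statement of the lemma. There is no real obstacle here; the only thing to be careful about is that the strict inequality $\lambda>n$ (not merely $\lambda\geq n$) is what makes both claimed inequalities strict, and this is guaranteed by the hypothesis $\delta\neq 1$ built into the definition of a simple perturbed PCM.
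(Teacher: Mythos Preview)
Your proof is correct and follows essentially the same route as the paper: both use formula~(\ref{EQ:Farkas26}) to write the ratio as $x_1(\lambda-1+\delta)/(\lambda-1+\tfrac{1}{\delta})$ and reduce the comparison with $x_1\delta$ to the inequality $\lambda>2$, which holds since $\lambda>n\geq 3$. Your factorisation $(\delta-1)(\lambda-2)$ handles both cases $\delta\gtrless 1$ in a single stroke, whereas the paper treats $\delta>1$ and then remarks that $\delta<1$ is analogous, but this is only a cosmetic difference.
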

\begin{proof}
Let $\delta>1.$
From (\ref{EQ:Farkas26}), the approximation of $a_{12}=\delta x_1$ is
\[\frac{w_1^{EM}}{w_2^{EM}}=\frac{\lambda-1+\delta}{\frac{1}{x_1}(\lambda-1+\frac{1}{\delta})}=x_1 \frac{\lambda-1+\delta}{\lambda-1+\frac{1}{\delta}}. \]
We shall prove that
$ \frac{\lambda-1+\delta}{\lambda-1+\frac{1}{\delta}} <\delta, $
that is,
$\lambda -1 + \delta < \delta \lambda - \delta +1$
$\Leftrightarrow$
$2(\delta-1) < \lambda (\delta-1)$
$\Leftrightarrow$
$2 < \lambda$. The last inequality holds, because $\lambda > n \geq 3$.
The case $\delta < 1$ is analogous.
\end{proof}

\begin{lemma}
\label{a1j}
Let $ j  > 2 $.
If $\delta>1,$ then $\frac{w_1^{EM}}{w^{EM}_{j}} > a_{1j}$.
If $\delta<1,$ then $\frac{w_1^{EM}}{w^{EM}_{j}} < a_{1j}$.
\end{lemma}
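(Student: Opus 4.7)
The plan is to mimic the structure of the proof of Lemma~\ref{a12}: use the closed-form expression (\ref{EQ:Farkas26}) for the eigenvector coordinates, cancel the common factor $x_{j-1}$, and reduce everything to a polynomial inequality in $\lambda$ and $\delta$ that can be resolved using the characteristic equation for $\lambda_\delta$.

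First I would note that for $j>2$ the relevant entry of $\mathbf{A}_\delta$ is $a_{1j}=x_{j-1}$, and from (\ref{EQ:Farkas26}) one has $w_1^{EM}=\lambda-1+\delta$ and $w_j^{EM}=\frac{1}{x_{j-1}}\,\lambda\,\frac{\lambda-2}{n-2}$. Hence
\[
\frac{w_1^{EM}}{w_j^{EM}}=x_{j-1}\cdot\frac{(n-2)(\lambda-1+\delta)}{\lambda(\lambda-2)},
\]
so comparing this ratio with $a_{1j}=x_{j-1}$ is equivalent to comparing $(n-2)(\lambda-1+\delta)$ with $\lambda(\lambda-2)$. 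A short rearrangement turns the desired inequality into
\[
\lambda(\lambda-n)\;\lessgtr\;(n-2)(\delta-1),
\]
with the direction depending on whether $\delta>1$ or $\delta<1$.

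Next, to handle $\lambda(\lambda-n)$, I would invoke the cubic characteristic equation $\lambda^3-n\lambda^2=(n-2)\bigl(\delta+\tfrac{1}{\delta}-2\bigr)$ recorded after (\ref{EQ:SimplePerturbedPCM}), which rewrites as
\[
\lambda(\lambda-n)=\frac{(n-2)(\delta-1)^2}{\lambda\,\delta}.
\]
Substituting this into the target inequality reduces the claim to checking whether $\frac{(n-2)(\delta-1)^2}{\lambda\delta}$ is smaller or larger than $(n-2)(\delta-1)$.

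The case $\delta<1$ is immediate: $\lambda>n$ forces $\lambda(\lambda-n)>0$, while $(n-2)(\delta-1)<0$, so the inequality $\lambda(\lambda-n)>(n-2)(\delta-1)$ is automatic, yielding $\frac{w_1^{EM}}{w_j^{EM}}<a_{1j}$. The case $\delta>1$ is the one that actually requires the characteristic equation: after dividing the resulting inequality by the positive quantity $(n-2)(\delta-1)$, it collapses to $\delta-1<\lambda\delta$, equivalently $\delta(\lambda-1)>-1$, which holds trivially since $\lambda>n\geq 3$ and $\delta>0$. I expect the only mild obstacle to be bookkeeping of inequality directions when $\delta$ crosses $1$; otherwise the argument is purely a cubic-equation substitution.
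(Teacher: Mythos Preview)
Your argument is correct. The $\delta<1$ branch coincides exactly with the paper's proof: both use formula~(\ref{EQ:Farkas26}) and observe that $\lambda(n-\lambda)+(\delta-1)(n-2)<0$ because each summand is negative. The difference lies in the $\delta>1$ branch. You stay with formula~(\ref{EQ:Farkas26}), which leads to the inequality $\lambda(\lambda-n)<(n-2)(\delta-1)$ where both sides are positive; to resolve it you bring in the characteristic cubic $\lambda^{2}(\lambda-n)=(n-2)(\delta-1)^{2}/\delta$ and reduce to the triviality $\delta-1<\lambda\delta$. The paper instead switches to the alternative eigenvector representation~(\ref{EQ:Farkas24}) for $\delta>1$, obtaining $\frac{w_1^{EM}}{w_j^{EM}}=x_{j-1}\,\frac{\lambda(\lambda-n+1)}{\lambda-1+1/\delta}$, so the target inequality becomes $(\lambda^{2}-\lambda n)+(1-1/\delta)>0$, again a sum of two positive terms, and no appeal to the cubic is needed. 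Your route is more uniform (one formula throughout) at the price of invoking the characteristic equation; the paper's route is slightly more elementary but relies on having both eigenvector formulas at hand.
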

\begin{proof}
Let $\delta>1.$
From (\ref{EQ:Farkas24}), the approximation of $a_{1j}=x_{j-1}$
$(j=3,4,\dots,n)$  is
\[
\frac{w_1^{EM}}{w^{EM}_{j}}=\frac{\lambda
(\lambda-n+1)}{\frac{1}{x_{j-1}}(\lambda-1+\frac{1}{\delta})}=x_{j-1}\frac{\lambda
(\lambda-n+1)}{\lambda-1+\frac{1}{\delta}}   .
\]
The proposition is equivalent to
$\frac{\lambda \left( \lambda - n + 1\right)}{\lambda - 1 + \frac{1}{\delta} }  > 1$
$\Leftrightarrow$
$\lambda (\lambda - n + 1 ) > \lambda - 1 + \frac{1}{\delta} $
$\Leftrightarrow$
$\lambda^2 - \lambda n + \lambda > \lambda - 1 + \frac{1}{\delta}$
$\Leftrightarrow$
$(\lambda^2 - \lambda n) + \left(1 - \frac{1}{\delta}\right) > 0$. The f{\kern0pt}irst expression is positive because $\lambda >n$, and the second one is positive because $\delta >1$.\\
Now let $\delta < 1.$
From (\ref{EQ:Farkas26}), the approximation of $a_{1j}=x_{j-1}$
$(j=3,4,\dots,n)$  is  %
\[
\frac{w_1^{EM}}{w^{EM}_{j}}=
\frac{\lambda-1+\delta}{\frac{1}{x_{j-1}}\lambda \frac{\lambda-2}{n-2}} =
x_{j-1} \frac{(n-2)(\lambda-1+\delta)}{\lambda(\lambda-2)},
\]
the objective is to prove $(n-2)(\lambda-1+\delta) < \lambda(\lambda-2)$
$\Leftrightarrow$
$\lambda(n-\lambda) + (\delta-1)(n-2)  < 0$.
The f{\kern0pt}irst product is negative, because $\lambda > n$, the second product
is also negative, because $\delta < 1.$
\end{proof}

\begin{lemma}
\label{a2j}
Let $ j >2$.
If $\delta>1,$  then $\frac{w_2^{EM}}{w^{EM}_{j}} < a_{2j}$.
If $\delta<1,$  then $\frac{w_2^{EM}}{w^{EM}_{j}} > a_{2j}$.
\end{lemma}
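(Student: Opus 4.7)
The plan is to compute the ratio $w_2^{EM}/w_j^{EM}$ directly from the two closed-form expressions for the principal right eigenvector given in the excerpt, and compare it with $a_{2j} = x_{j-1}/x_1$. Following the pattern of the preceding two lemmas, I would use formula (\ref{EQ:Farkas24}) when $\delta > 1$ and (\ref{EQ:Farkas26}) when $\delta < 1$, choosing in each case the representation that makes the algebra collapse.

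For $\delta > 1$, using (\ref{EQ:Farkas24}) I obtain
\[
\frac{w_2^{EM}}{w_j^{EM}} = \frac{x_{j-1}}{x_1} \cdot \frac{\lambda - \left(1 - \frac{1}{\delta}\right)(\lambda - n + 2)}{\lambda - 1 + \frac{1}{\delta}}.
\]
The denominator is positive since $\lambda > n \geq 3$, so the claim $w_2^{EM}/w_j^{EM} < x_{j-1}/x_1$ reduces to $(1 - 1/\delta)(\lambda - n + 2) > (1 - 1/\delta)$. Since $1 - 1/\delta > 0$, dividing gives $\lambda > n - 1$, which follows from $\lambda > n$.

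For $\delta < 1$, using (\ref{EQ:Farkas26}) I get
\[
\frac{w_2^{EM}}{w_j^{EM}} = \frac{x_{j-1}}{x_1} \cdot \frac{(n-2)\left(\lambda - 1 + \frac{1}{\delta}\right)}{\lambda(\lambda - 2)},
\]
and the claim is equivalent to $(n-2)(1/\delta - 1) > \lambda(\lambda - n)$. Both sides are strictly positive (the left because $\delta < 1$, the right because $\lambda > n$), so unlike the analogous step in Lemma \ref{a1j} the inequality is not settled by a sign argument alone; this is the main obstacle. I would overcome it by invoking the characteristic equation $\lambda^3 - n\lambda^2 = (n-2)(\delta + 1/\delta - 2)$ stated earlier in Section 2, which gives $\lambda(\lambda - n) = (n-2)(\delta + 1/\delta - 2)/\lambda$, and then using the elementary identities $\delta + 1/\delta - 2 = (1 - \delta)^2/\delta$ and $1/\delta - 1 = (1 - \delta)/\delta$. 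After canceling the common positive factor $(n-2)(1 - \delta)/\delta$, the inequality collapses to $\lambda > 1 - \delta$, which is immediate since $\lambda > n \geq 3$.
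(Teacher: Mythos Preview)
Your proof is correct. The $\delta>1$ case is identical to the paper's argument: both use formula (\ref{EQ:Farkas24}) and reduce the inequality to $\lambda>n-1$.

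For $\delta<1$, however, you take a more laborious route than necessary. You switch to formula (\ref{EQ:Farkas26}), which leads to the inequality $(n-2)(1/\delta-1)>\lambda(\lambda-n)$ with both sides positive; you then resolve this by invoking the characteristic equation for $\lambda$ to rewrite $\lambda(\lambda-n)$ and eventually reduce to $\lambda>1-\delta$. This works, but the paper's intended argument (signalled by ``analogous'') is to stay with formula (\ref{EQ:Farkas24}) for $\delta<1$ as well. The same algebraic reduction as in the first case gives $(1-1/\delta)(\lambda-n+2)<(1-1/\delta)$; now $1-1/\delta<0$, so dividing reverses the inequality and again yields $\lambda-n+2>1$, i.e.\ $\lambda>n-1$. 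No appeal to the characteristic equation is needed. The pattern you inferred from Lemma~\ref{a1j} (switch formulas when $\delta$ crosses $1$) happens not to be the efficient choice here, because in Lemma~\ref{a2j} the factor $(1-1/\delta)$ appears on both sides and its sign change does all the work.
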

\begin{proof}
Let $\delta>1.$
From (\ref{EQ:Farkas24}), the approximation of
$a_{2j}=\frac{x_{j-1}}{x_1}$ $(j=3,4,\dots,n)$ is
\[\frac{w_2^{EM}}{w^{EM}_{j}}=\frac{\frac{1}{x_1}\left[\lambda-(1-\frac{1}{\delta})(\lambda-n+2)\right]}{\frac{1}{x_{j-1}}(\lambda-1+\frac{1}{\delta})}=
\frac{x_{j-1}}{x_1}\frac{\lambda-(1-\frac{1}{\delta})(\lambda-n+2)}{\lambda-1+\frac{1}{\delta}} .
\]
 Thus, the proposition becomes equivalent to
$\frac{\lambda - \left( 1- \frac{1}{\delta} \right)\left( \lambda - n +2 \right)}{\lambda - \left( 1- \frac{1}{\delta} \right)} \frac{x_{j-1}}{x_1} < \frac{x_{j-1}}{x_1}$
$\Leftrightarrow$
$\lambda - \left( 1- \frac{1}{\delta} \right)\left( \lambda - n + 2 \right) < \lambda - \left( 1- \frac{1}{\delta} \right)$
$\Leftrightarrow$
$- \left( \lambda - n +2  \right) < - 1$
$\Leftrightarrow$
$\lambda - n > -1$, that holds, because $\lambda >n$.
The proof of case $\delta < 1$ is analogous to the reciprocals of the above described assertions.
\end{proof}

The next theorem states the main result of our paper:

\begin{theorem}
\label{maintheorem}  
The principal right eigenvector of a simple perturbed pairwise comparison matrix is ef{\kern0pt}f{\kern0pt}icient.
\end{theorem}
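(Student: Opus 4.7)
The plan is to apply Theorem \ref{thm:TheoremDirectedGraphEfficient}: it suffices to show that the digraph $G = (V,\overrightarrow{E})_{\mathbf{A}_\delta,\mathbf{w}^{EM}}$ is strongly connected. The three preceding lemmas already determine every relevant comparison $w_i^{EM}/w_j^{EM}$ versus $a_{ij}$; the remaining task is to convert these strict inequalities into the presence or absence of individual arcs and then check connectivity.

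First, I would observe that because the bottom-right $(n-2)\times(n-2)$ submatrix of $\mathbf{A}_\delta$ is consistent and perfectly represented by $\mathbf{w}^{EM}$, we have $w_i^{EM}/w_j^{EM} = a_{ij}$ for all $i,j \geq 3$; hence Definition \ref{def:DirectedGraphEfficient} places \emph{both} arcs $i \to j$ and $j \to i$ in $\overrightarrow{E}$ for every such pair. In particular, the subgraph induced on $\{3,4,\dots,n\}$ is already strongly connected (even complete).

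Next I would split on the sign of $\delta-1$ and read the arc structure off Lemmas \ref{a12}, \ref{a1j}, \ref{a2j}. For $\delta>1$: Lemma \ref{a12} gives $w_1^{EM}/w_2^{EM}<a_{12}$, equivalently $w_2^{EM}/w_1^{EM}>a_{21}$, so the arc $2\to 1$ lies in $\overrightarrow{E}$; Lemma \ref{a1j} puts every arc $1\to j$ ($j\geq 3$) into $\overrightarrow{E}$; Lemma \ref{a2j} puts every arc $j\to 2$ ($j\geq 3$) into $\overrightarrow{E}$. Strong connectivity then follows from the short directed paths $1\to j\to 2\to 1$ (using any $j\geq 3$), combined with the already-established bidirectional edges on $\{3,\dots,n\}$: from $j\geq 3$ we reach $2$ directly, then $2\to 1$, then $1\to k$ for any $k\geq 3$. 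The case $\delta<1$ is symmetric: the lemmas now give arcs $1\to 2$, $j\to 1$ and $2\to j$ for $j\geq 3$, and the analogous cycle $1\to 2\to j\to 1$ closes everything up.

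The main obstacle is essentially bookkeeping rather than mathematics, since the three lemmas already do the analytic work; one only has to be careful to translate each inequality $w_i^{EM}/w_j^{EM}\lessgtr a_{ij}$ correctly into the right \emph{direction} of arc under Definition \ref{def:DirectedGraphEfficient} (remembering that the definition uses $\geq$, so a strict inequality in one direction excludes one arc and forces the reverse arc via reciprocity). Once the arc lists for the two cases are tabulated, producing an explicit directed cycle through every vertex is immediate.
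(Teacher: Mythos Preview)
Your argument is correct and coincides almost verbatim with the paper's Second proof: it uses Lemmas \ref{a12}--\ref{a2j} to determine the arc set of the digraph of Definition \ref{def:DirectedGraphEfficient}, observes the bidirectional complete subgraph on $\{3,\dots,n\}$, and exhibits a directed cycle through nodes $1$ and $2$ to conclude strong connectivity via Theorem \ref{thm:TheoremDirectedGraphEfficient}. The paper also gives an alternative First proof that bypasses the digraph characterization and argues efficiency directly from Definition \ref{def:DefinitionEfficient}, but your route matches the Second proof essentially step for step.
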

\textbf{First proof:} \\
Suppose that $\delta>1$ and apply Lemmas \ref{a12}, \ref{a1j} and \ref{a2j}.
Suppose there exists a weight vector $\mathbf{w}^{\prime}=(w_1^{\prime},w_2^{\prime},w_3^{EM},w_4^{EM},\dots,w_n^{EM})^{\top}$
that approximates matrix $\mathbf{A}_{\delta}$  at least as well as $\mathbf{w}^{EM}$ does,
and strictly better  than $\mathbf{w}^{EM}$   in one position.
 As is readily seen from    Lemma \ref{a1j}, ${w}_1^{\prime} \leq {w}_1^{EM}$.
Similarly, from Lemma \ref{a2j}, $ {w}_2^{\prime} \geq {w}_2^{EM}$.
At least one of these inequalities  must be  strict, otherwise $\mathbf{w}^{\prime} = \mathbf{w}^{EM}$.
They, together with Lemma \ref{a12}, imply that
\[
 \frac{{w}_1^{\prime}}{{w}_2^{\prime}} < \frac{{w}_1^{EM}}{{w}_2^{EM}} < a_{12},
\]
therefore  $\mathbf{w}^{\prime}$  provides a strictly worse approximation for $a_{12}$ than $\mathbf{w}^{EM}$
 does, which contradicts the initial supposition.
The case $\delta < 1$ is analogous.  \QED

\bigskip
The First proof is simple and requires no prior knowledge in multi-objective optimization problems.
Theorem 3.1. is explicitly based on matrix theory that provides a proof for the ef{\kern0pt}f{\kern0pt}iciency of the
 principal right eigenvector of  PCMs  with the specif{\kern0pt}ic structure of matrix
 $\mathbf{A}_{\delta}.$
Additionally, by depicting the digraph representation of the studied problem one can
easily visualize and check for such PCMs whether or not these solutions are, in fact,
  ef{\kern0pt}f{\kern0pt}icient. \\

\noindent
\textbf{Second proof:} \\
Let us apply Def{\kern0pt}inition \ref{def:DirectedGraphEfficient} in order to draw the
digraph associated to matrix $ \mathbf{A}_{\delta}$ and its principal right eigenvector $\mathbf{w}^{EM}$.
Namely, an arc goes from node $i$ to node $j$ if and only if $\frac{{w}_i^{EM}}{{w}_j^{EM}} \geq a_{ij}$.
Let $\delta>1$.
Lemma \ref{a12} implies that $(2 \rightarrow 1) \in \overrightarrow{E}$ and
$(1 \rightarrow 2) \notin \overrightarrow{E}.$
Lemma \ref{a1j} implies that $(1 \rightarrow j) \in \overrightarrow{E}$ and
$(j \rightarrow 1) \notin \overrightarrow{E}$ for all $j=3,4,\ldots,n$.
Lemma \ref{a2j} implies that $(2 \rightarrow j)  \notin   \overrightarrow{E}$ and
$(j \rightarrow 2)  \in  \overrightarrow{E}$ for all $j=3,4,\ldots,n$.
For $i,j=3,4,\ldots,n, \, i \neq j, \, \,$
$a_{ij} = \frac{{w}_i^{EM}}{{w}_j^{EM}}$ implies that
$(i \rightarrow j) \in \overrightarrow{E}$ and $(j \rightarrow i) \in \overrightarrow{E}.$
 The digraph is drawn in Figure 2.

\unitlength 1mm
\begin{center}
\begin{picture}(100,55)
\put(14,10){\resizebox{70mm}{!}{\rotatebox{0}{\includegraphics{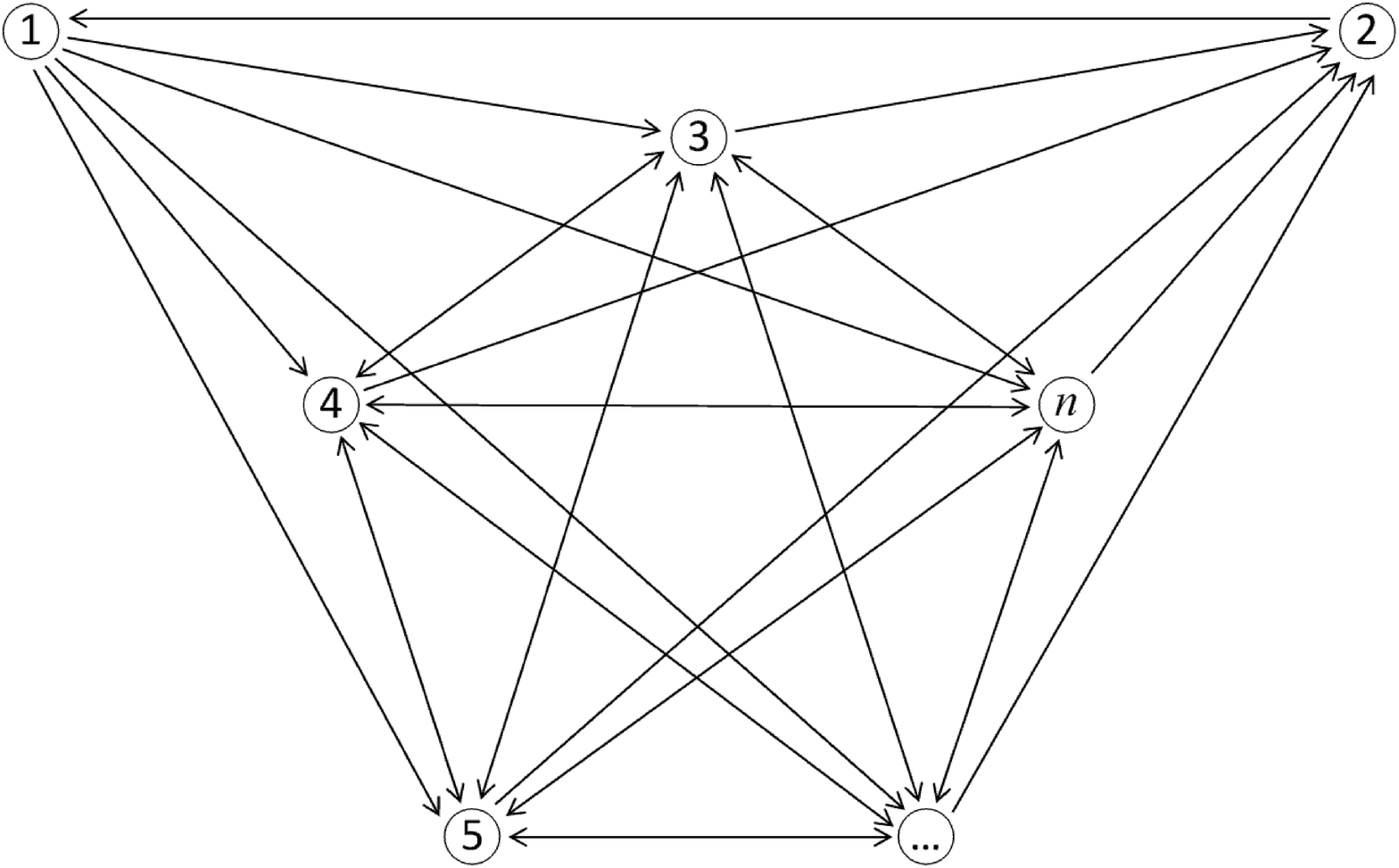}}}}
\put(-10,3){\makebox{\textbf{Figure 2.}
The strongly connected  digraph  of a simple perturbed }}
\put(18,-1){\makebox{ pairwise comparison matrix $(\delta>1)$}}
\end{picture}
\end{center}

\bigskip

The  digraph  is strongly connected,
Theorem \ref{thm:TheoremDirectedGraphEfficient} can readily be used to show that the eigenvector
$\mathbf{w}^{EM}$ is ef{\kern0pt}f{\kern0pt}icient.
The case $\delta < 1$ is similar:
the corresponding digraph is the same as that of displayed in Figure 2 except that
nodes 1 and 2 would be interchanged.  \QED

\bigskip

 Our experiments have shown that the characterization of
ef{\kern0pt}f{\kern0pt}iciency (Theorem \ref{thm:TheoremDirectedGraphEfficient}) with the
use of directed graphs (according to Def{\kern0pt}inition \ref{def:DirectedGraphEfficient})
is, indeed, very robust and this approach seems to be applicable in more complicated cases, too.

\begin{corollary}
The principal right eigenvector of a $3 \times 3$ PCM is ef{\kern0pt}f{\kern0pt}icient.
\end{corollary}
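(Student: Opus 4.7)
The plan is to reduce this corollary immediately to the main theorem via Remark \ref{remark:3x3}. That remark asserts that every $3 \times 3$ pairwise comparison matrix is either consistent or simple perturbed, so a case split on these two alternatives exhausts all possibilities.

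In the consistent case, the claim is essentially already recorded in the paragraph following Definition \ref{def:DefinitionEfficient}: since $a_{ij} = w_i^{EM}/w_j^{EM}$ for all $i,j$, the first absolute difference in Definition \ref{def:DefinitionEfficient} is already zero in every entry, so no alternative weight vector $\mathbf{w}^{\prime}$ can strictly improve any entry. Hence $\mathbf{w}^{EM}$ is efficient.

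In the simple perturbed case, the conclusion is exactly the content of Theorem \ref{maintheorem}, applied with $n=3$. Note that the lemmas \ref{a12}, \ref{a1j}, \ref{a2j} used in the proof of that theorem rely only on $\lambda > n \geq 3$, which is available here, so no dimension issue arises.

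There is no genuine obstacle: the entire content of the corollary is packaged into Remark \ref{remark:3x3} together with Theorem \ref{maintheorem}. The only thing worth verifying, and the closest thing to a subtle point, is that Remark \ref{remark:3x3} really does cover every $3 \times 3$ PCM — but this is clear because a $3 \times 3$ PCM has only three independent off-diagonal entries $a_{12}, a_{13}, a_{23}$, and setting $x_1 = a_{13}a_{23}$, $x_2 = a_{13}$, and $\delta = a_{12}/(x_1) = a_{12}/(a_{13}/a_{23})$ realizes any such matrix as an $\mathbf{A}_\delta$ in the sense of (\ref{EQ:SimplePerturbedPCM}), with $\delta = 1$ precisely in the consistent case.
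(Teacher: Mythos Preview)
Your approach is essentially the same as the paper's: the paper's proof is the one-line ``follows from Remark~\ref{remark:3x3} and Theorem~\ref{maintheorem}'', and you have spelled out exactly this case split. (The paper also records, as an aside, an alternative argument via the equivalence of the eigenvector method with the row geometric mean in the $3\times 3$ case, combined with the known efficiency of the row geometric mean weight vector.)

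One small slip in your auxiliary verification of Remark~\ref{remark:3x3}: from $a_{23}=x_2/x_1$ and $x_2=a_{13}$ one gets $x_1=a_{13}/a_{23}$, not $x_1=a_{13}a_{23}$ as you wrote. Your subsequent formula $\delta=a_{12}/(a_{13}/a_{23})$ already uses the correct value, so this is just a typo, and in any case this verification is not needed since Remark~\ref{remark:3x3} is already available.
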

\begin{proof}
The claim follows from Remark \ref{remark:3x3} and Theorem \ref{maintheorem}.
It is worth noting that ef{\kern0pt}f{\kern0pt}iciency follows also from the equivalence of the
eigenvector method and the row geometric mean, also known as the optimal solution to the logarithmic
least squares problem \cite[Section 3.2]{Dijkstra2013}. Blanquero, Carrizosa and Conde \cite[Corollary 7]{BlanqueroCarrizosaConde2006}
proved that the weight vector calculated by the row geometric mean is ef{\kern0pt}f{\kern0pt}icient.
\end{proof}

\section{A numerical example}
\begin{example} \label{example2}  
Let us choose $n=4, x_1 = 2, x_2 = 4, x_3 = 8, \delta = 1.5$ in formula (\ref{EQ:SimplePerturbedPCM}):
\[
\mathbf{A}_{1.5} =
\begin{pmatrix}
$\,$  1   $\,\,$ & $\,\,$   3   $\,\,$ & $\,\,$   4  $\,\,$ & $\,\,$    8  $\,$    \\
$\,$ 1/3  $\,\,$ & $\,\,$   1   $\,\,$ & $\,\,$   2  $\,\,$ & $\,\,$    4  $\,$    \\
$\,$ 1/4  $\,\,$ & $\,\,$  1/2  $\,\,$ & $\,\,$   1  $\,\,$ & $\,\,$    2  $\,$   \\
$\,$ 1/8  $\,\,$ & $\,\,$  1/4  $\,\,$ & $\,\,$  1/2 $\,\,$ & $\,\,$    1   $\,$
\end{pmatrix}.
\]
Matrix $\mathbf{A}_{1.5}$ is a simple perturbed PCM.
Its principal right eigenvector $\mathbf{w}^{EM}$ and the consistent
approximation of $\mathbf{A}_{1.5}$, generated by $\mathbf{w}^{EM}$,
are displayed, truncated at 8 and 4 correct digits, respectively:
\[
\mathbf{w}^{EM} =
\begin{pmatrix}
  0.57313428  \\
  0.23374121  \\
  0.12874966  \\
  0.06437483
\end{pmatrix},
\quad
\left[ \frac{w^{EM}_i}{w^{EM}_j} \right] =
\begin{pmatrix}
$\,$   1    $\,\,$ & $\,\,$ 2.4520 $\,\,$ & $\,\,$ 4.4515 $\,\,$ & $\,\,$ 8.9030 $\,$    \\
$\,$ 0.4078 $\,\,$ & $\,\,$   1    $\,\,$ & $\,\,$ 1.8154 $\,\,$ & $\,\,$ 3.6309 $\,$    \\
$\,$ 0.2246 $\,\,$ & $\,\,$ 0.5508 $\,\,$ & $\,\,$    1   $\,\,$ & $\,\,$   2    $\,$   \\
$\,$ 0.1123 $\,\,$ & $\,\,$ 0.2754 $\,\,$ & $\,\,$   1/2  $\,\,$ & $\,\,$   1    $\,$
\end{pmatrix}.
\]
One can verify that entry 3 in position (1,2) of $\mathbf{A}_{1.5}$
is underestimated by $\frac{w^{EM}_1}{w^{EM}_2} = 2.4520$, in accordance with Lemma (\ref{a12})
and represented by the arc from node 2 to node 1 in Figure 2. Lemmas (\ref{a1j})-(\ref{a2j}) can
also be checked. Finally, entry 2 in position (3,4) of $\mathbf{A}_{1.5}$
is estimated perfectly by $\frac{w^{EM}_3}{w^{EM}_4} = 2$, represented by a
bi-directed edge between nodes 3 and 4. The directed graph in Figure 2 is
strongly connected, which ensures that the principal right eigenvector
is ef{\kern0pt}f{\kern0pt}icient.
\end{example}  

\unitlength 1mm
\begin{center}
\begin{picture}(100,40)
\put(39,10){\resizebox{20mm}{!}{\rotatebox{0}{\includegraphics{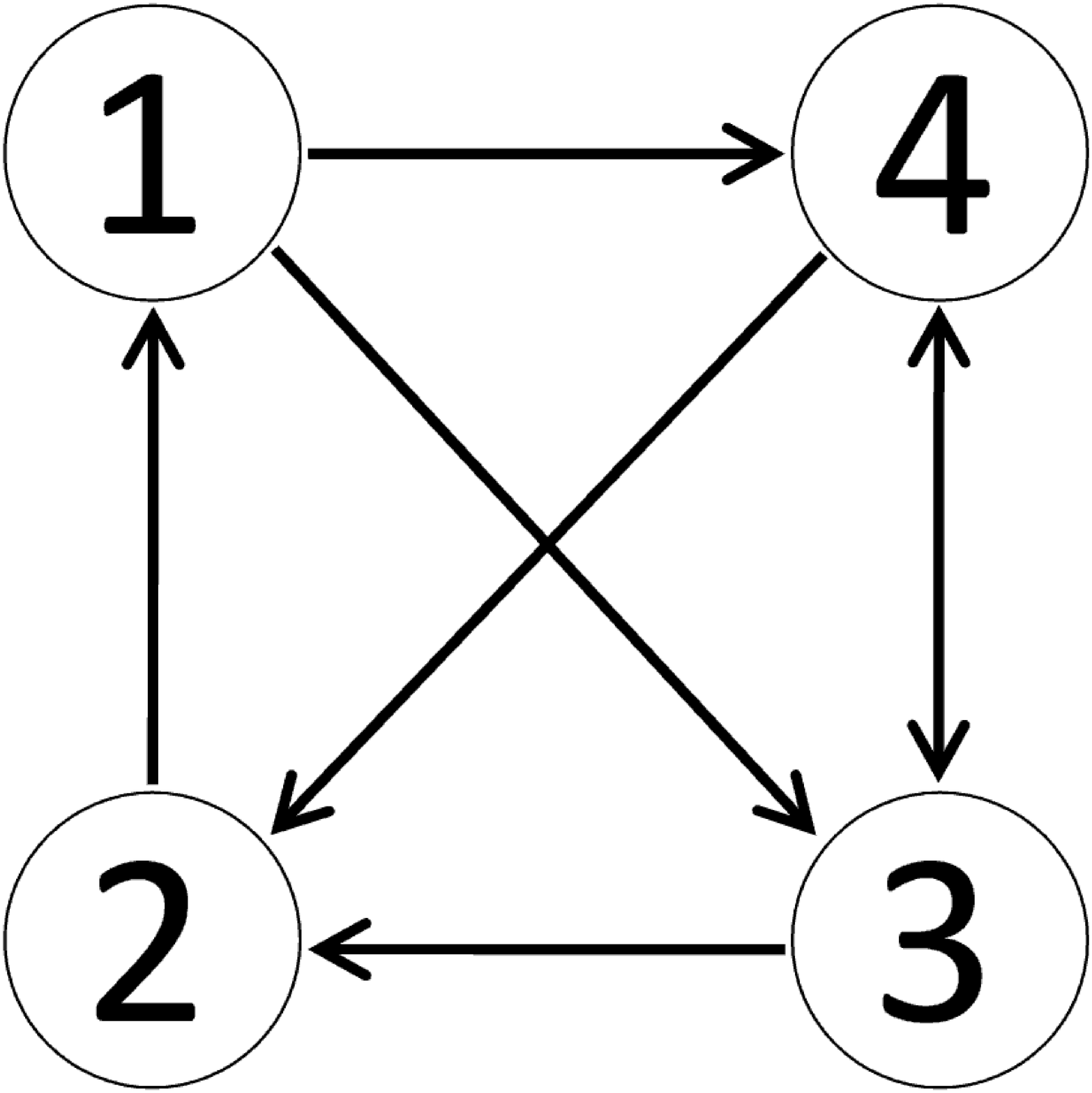}}}}
\put(-13,3){\makebox{\textbf{Figure 2.}
The principal right eigenvector in Example 5.1 is ef{\kern0pt}f{\kern0pt}icient, }}
\put(5,-1){\makebox{because its associated digraph is strongly connected }}
\end{picture}
\end{center}

\section{Conclusions and open questions}
We have made a small step in the  presumably  long way of
f{\kern0pt}inding a necessary and suf{\kern0pt}f{\kern0pt}icient condition of that
the principal right eigenvector be ef{\kern0pt}f{\kern0pt}icient. It has been shown
in the paper that if the PCM is
simple perturbed, that is, it can be made consistent by
a modif{\kern0pt}ication of an element and its reciprocal, then
the principal right eigenvector is ef{\kern0pt}f{\kern0pt}icient.
The explicit formulas of the principal right eigenvector
of a simple perturbed PCM enabled us to provide a  solid  proof.
We hope to return to a similar special case, when
the PCM can be made consistent by a modif{\kern0pt}ication of two elements
and their reciprocals.
However, in the general case, explicit formulas do not exist,
or, even if they exist, they may be hopelessly complicated.

Although we would not overrate the practical signif{\kern0pt}icance of simple
perturbed PCMs, they do occur in real decision problems \cite[Table 1]{BozokiFulopPoesz2011}.
 On the other hand, they might help understanding the phenomenon of
(in)ef{\kern0pt}f{\kern0pt}iciency, which is enigmatic at the moment.
The question of the possible relations between the ef{\kern0pt}f{\kern0pt}iciency of the principal
right eigenvector and the level of inconsistency is also
to be investigated.

The authors believe that ef{\kern0pt}f{\kern0pt}iciency is a reasonable and desirable property,
independently of the background of the analyst. Being an economist,
engineer, decision theorist, preference modeler or mathematician,
who faces an estimation problem, inef{\kern0pt}f{\kern0pt}icient solutions are hard to argue for.
 The comparative studies of weighting methods,
such as \cite{BajwaChooWedley2008,ChooWedley2004,Dijkstra2013,GolanyKress1993},
should be extended by adding ef{\kern0pt}f{\kern0pt}iciency to the list of axioms/criteria.

\section{Acknowledgments}
The authors would like to thank the three anonymous reviewers for their valuable
and constructive recommendations.
The authors are grateful to J\'ozsef Temesi
(Department of Operations Research and Actuarial Sciences, Corvinus University of Budapest)
for raising the idea of the ef{\kern0pt}f{\kern0pt}iciency analysis of simple perturbed pairwise comparison matrices.
 Comments of J\'anos F\"ul\"op
(Institute for Computer Science and Control, Hungarian Academy of Sciences (MTA SZTAKI) and \'Obuda University, Budapest)
and \"Ors Reb\'ak (Corvinus University of Budapest)
are highly appreciated.
Research was supported in part by OTKA grant K 111797.\\

This paper is dedicated to the second author's grandmother, who celebrated her 100th birthday on September 15, 2015.

\bibliographystyle{plain}
\bibliography{Abele-NagyBozoki}

\end{document}